\definecolor{Gray}{gray}{0.8}
\newcommand{\round}[1]{\ensuremath{\lfloor#1\rceil}}
\begin{document}
\title{Semi-Monotone Goldstein Line Search Strategy\\
 with Application in Sparse Recovery}
\titlerunning{Semi-Monotone Goldstein Line Search Strategy for Compressed Sensing}
%
\author{Shima Shabani\inst{}\orcidID{0009-0008-7052-2361} 
\and
Michael Breu{\ss}\inst{}
\orcidID{0000-0002-5322-2411}}
\authorrunning{S. Shabani and M. Breu{\ss}}
%
\institute{Institute for Mathematics, Brandenburg University of Technology\\Platz der Deutschen Einheit 1, 03046 Cottbus, Germany
\email{\{shima.shabani,breuss\}@b-tu.de}}
\maketitle              
\begin{abstract}
Line search methods are a prominent class of iterative methods to solve unconstrained minimization problems. 
These methods produce new iterates utilizing a suitable step size 
after determining proper directions for minimization. 
In this paper we propose a semi-monotone line search 
technique based on the Goldstein quotient for dealing with 
convex non-smooth optimization problems.
The method allows to employ large step sizes away from the optimum
thus improving the efficacy compared to standard Goldstein approach.
For the presented line search 
method, we prove global convergence to a stationary point and local 
R-linear convergence rate in strongly convex cases.
We report on some experiments in compressed sensing. 
By comparison with several state-of-the-art algorithms 
in the field, we demonstrate the competitive performance of 
the proposed approach and specifically its high efficiency.

\keywords{Line search method \and Non-smooth optimization \and Non-monotone technique \and Goldstein quotient \and Sparse recovery \and Compressed sensing \and Global convergence \and R-linear convergence.}
\end{abstract}

\section{Introduction}

Line search methods are popular methods in unconstrained optimization \cite{Nocedal-Wright}.
They typically have the format of an iterative method combining a direction
of the iteration with a suitable step size.
Effective choices of both direction and step size are key to success and robustness
of the line search approach.
Given a suitable direction, monotone line search methods aim to ensure that a descent
in the objective function takes place, while non-monotone line search 
methods permit some growth in the function value, see e.g. \cite{GLL}. 
The non-monotone techniques represent a relaxation of monotone
line searching in a way that beneficial properties related 
to monotone methods like global convergence may be preserved,
while they help to improve the chance of 
finding a global optimum and in the convergence speed \cite{ZH}.

To select a practical step size, it is helpful to consider 
appropriate conditions to make sufficient decrease 
of the objective function. Popular conditions as discussed e.g. in
\cite{Nocedal-Wright} are the conditions proposed by
Armijo \cite{arm}, Wolfe \cite{WP}, or Goldstein \cite{GP}. 
As it is well-known, the Armijo condition
confirms the sufficient decrease for a given scheme even if the
step size is very small, which may result in the scheme being 
inefficient. The Wolfe condition overcomes this problem by
considering additionally the local curvature, but this comes 
at the expense of a considerably higher computational cost.
The Goldstein condition prevents too small step sizes
by making a possible lower bound explicit, which may 
eventually lead to higher 
computational efficiency compared to use of Wolfe condition.

As the setting for line search methods we consider in this paper
the unconstrained optimization problem
\begin{equation}\label{e:defprob0}
\begin{array}{ll}
\min &~F(x)=f(x)+\mu c(x)\\
\textrm{s.t.}&~ x \in \mathbb{R}^{n}
\end{array}
\end{equation}
where the function $f(x)$ is convex and smooth, the regularization
$c(x)$ is a convex and non-smooth function, and the parameter
$\mu\in\mathbb{R^{+}}$ realizes a trade-off specification between the
objective terms.
One particular form of \eqref{e:defprob0} that is of high interest 
in some contexts is the \textit{basis pursuit denoising}
problem \cite{Do,EK,FR}, denoted as
\begin{equation}\label{e:defprob1}
\begin{array}{ll}
\min &~F(x)=\frac{1}{2}\|Ax-b\|^2+\mu\|x\|_1 \\
\textrm{s.t.}&~ x \in \mathbb{R}^{n} 
\end{array}
\end{equation}
Here $\|\cdot\|$ is the standard Euclidean norm,
$\|\cdot\|_1$ stands for the $\ell_1$-norm, and 
$A\in\mathbb{R}^{m\times n}$ $(m\ll n)$ and $b\in\mathbb{R}^m$ are 
often called the measurement matrix and the observation vector, respectively. 
The model problem \eqref{e:defprob1} gives the sparsest solution of the 
underdetermined linear system $Ax=b$ when the measurements
are noisy, meaning that it is desired that almost all the coefficients 
of the linear combination form are zero. 
Compressed sensing \cite{EK,FR}, a technique for efficiently acquiring 
and reconstructing a signal, is an actual field of application for 
\eqref{e:defprob1}. 



{\bf Our contribution and relation to previous work.}
We build upon recent work on
optimization methods relying on the Goldstein 
quotient \cite{Go}.
The works \cite{NM,NMA} present an improved version of 
the Goldstein technique similarly to our underlying
approach, but limited to smooth optimization problems. 
With respect to non-smooth optimization, 
the most important work related to this paper
is given by \cite{SB}. 
There an efficient method utilizing a monotone 
Goldstein technique has been deviced for dealing with the 
basis pursuit denoising problem, proving global and 
R-linear local convergence in that setting.

In the current paper, we propose a new extension of 
technique proposed in \cite{SB}
that enables relaxation towards larger step sizes during 
minimization.
As the novel technique keeps the lower bound of the monotone
version of Goldstein's condition at the same time, we denote it 
here as semi-monotone Goldstein strategy.
For theoretical validation we prove global and 
R-linear local convergence of the resulting line search method
in the setting of a general convex non-smooth regularization 
function as in \eqref{e:defprob0}.
This is a considerably more general setting as in 
previous work \cite{SB}, as convergence proofs given 
there rely on a monotone Goldstein technique
and are also limited to the shrinkage operation 
for regularization as in \eqref{e:defprob1}.
In addition to our new theoretical results we present
a detailed evaluation of randomized numerical experiments 
that show the beneficial properties of our approach
compared to the scheme proposed in \cite{SB}
as well as its competitive performance with 
several state-of-the-art optimization methods.
\section{Review of Important Techniques}\label{subsec:con}

We consider a general iterative method that seeks 
the minimum of \eqref{e:defprob0} and produces 
to this end a sequence $\{x_k\}_{k\geq 0}$ by the 
line search rule.
In the following, the components of 
line search iteration and related concepts 
are briefly summarized for later use.

The basic line search scheme at an iteration index $k$ reads as
\begin{equation}\label{e:iterscheme}
x_{k+1} \, = \, x_k+\alpha_k d_k
\end{equation}
and it features the direction $d_{k}$ and a positive step size $\alpha_{k}$. 
For \eqref{e:defprob0}, the descent in the objective function in
direction $d_k$ from current iterate $x_k$ can be measured by
\begin{equation}\label{e:desdks0}
\Delta_k=d_k^T\nabla f_k+\mu
\Big(c(x_k+d_k)- c_k\Big)<0
\end{equation}
As for the notation in \eqref{e:desdks0}, for a function like $h(x)$, 
$h_k$ is the function value at $x_ k$ or $k$th iterate, and 
$\nabla h_k$ is the corresponding gradient value. 
The Goldstein condition for a non-smooth optimization strategy 
can be summarized, using $0<\theta_1<\theta_2<1$, as
\begin{equation}\label{e:gol2}
F_k+\theta_2\alpha_k\Delta_k\leq F_{k+1}\leq
F_k+\theta_1\alpha_k\Delta_k
\end{equation}
In \cite{SB} an efficient monotone 
line search technique for \eqref{e:defprob0} is presented
in the sense of {\sc Warth \& Werner} \cite{WW0}.
This means, in each iteration, the step size $\alpha_k$ 
is chosen such that the following quotient has a fixed 
positive lower bound $\gamma$ 
\begin{equation}\label{e:warth}
\Big(
F_k-F_{k+1}\Big)\frac{\|d_k\|^2}{\Delta_k^2}>\gamma>0
\end{equation}
for all $k\geq 0$ with descent direction $d_k$ as
\begin{equation}\label{e:dpro}
d_{k}=x_k^+-x_k,~~~~x_k^+=\mathscr{P}_{c}(x_{k}-\tau_k\nabla
f(x_k), \mu \tau_k)
\end{equation}
Here 
\begin{equation}\label{e:dpro1}
\mathscr{P}_{c}(x,\vartheta)=\textrm{argmin}_{y}\Big\{\vartheta c(y)+\frac{1}{2}\|x-
y\|^2\Big\}
\end{equation}
is the proximal operator \cite{PB} of the scaled function $c(y)$ at $x$ with $\vartheta>0$, 
and $\tau_k$ is a dynamic parameter employed as a step size for the implicit gradient descent step such that 
\begin{equation}\label{e:tauk}
 0<\tau_{\min}\leq\tau_k\leq\tau_{\max}<\infty   
\end{equation}
To estimate $\alpha_k$ in \eqref{e:iterscheme}, the authors 
of \cite{SB} consider the non-smooth Goldstein quotient
\begin{equation}\label{e:gol1}
\nu(\alpha_k)=\frac{F_{k+1}-F_k}{\alpha_k\Delta_k}
\end{equation}
by imposing the following monotone sufficient descent condition
\begin{equation}\label{e:sufdes}
\nu(\alpha_k)\left|\nu(\alpha_k)-1\right| \, \geq \, \theta
\end{equation}
for a fixed $\theta>0$. 
%
Based on \eqref{e:sufdes}, $\nu(\alpha_k)$ is supposed to be 
sufficiently positive and far from one to control 
the step size from below.

\section{Semi-Monotone Goldstein Strategy}\label{smgs}

As indicated, the idea presented in this paper extends the monotone 
line search method in \cite{SB} to a semi-monotone case, 
to use a larger board compared with \eqref{e:gol2} for 
estimating $\alpha_k$ and simultaneously controlling it from below. 

Let us define $l(k)$ as an integer satisfying $k-m(k)\leq l(k) \leq k$, in which $m(0) = 0$ and $0 \leq m(k)\leq \min\{m(k-1)+1, N-1\}$, with $N > 0$, to identify after a starting phase the maximum function value of $N$ prior successful iterates in step $k$ with corresponding index $l(k)$ via
\begin{equation}\label{e:nm2}
F_{l(k)}=\max\limits_{0\leq j \leq m(k)}\{F_{k-j}\}
\end{equation}
We consider the convex combination of $F_{l(k)}$ and current 
function value $F_k$ as 
\begin{equation}\label{e:nm1}
  R_k=\eta_k F_{l(k)}+(1-\eta_k)F_k 
\end{equation}
where $R_k\geq F_k$ in this way, and we have
$\eta_k\in[\eta_{\min},\eta_{\max}]$, $\eta_{\min}\in[0,1]$, and $\eta_{\max}\in[\eta_{\min},1]$.

As indicated, our aim is to keep the lower bound in \eqref{e:gol2} 
but to relax the upper bound to tackle the non-monotone case. 
To realize this we consider the estimate from \eqref{e:nm1} to obtain
\begin{equation}\label{e:golsnm}
F_k+\theta_2\alpha_k\Delta_k\leq F_{k+1}\leq
R_k+\theta_1\alpha_k\Delta_k
\end{equation}
Now, estimating $\alpha_k$, we define the relaxed version 
of the non-smooth Goldstein quotient \eqref{e:gol1} as 
\begin{equation}\label{e:golnm1}
\lambda(\alpha_k)=\frac{F_{k+1}-R_k}{\alpha_k\Delta_k}
\end{equation}
Motivated by \eqref{e:golsnm}, we impose the following 
\emph{semi-monotone sufficient descent} condition    
\begin{equation}\label{e:sufdescom}
\nu(\alpha_k)\left|1-\lambda(\alpha_k)\right| \, \geq \, \theta
\end{equation}
using \eqref{e:gol1} and \eqref{e:golnm1} to control the size 
of $\alpha_k$, especially keeping it away from zero.

We will show that condition \eqref{e:sufdescom} guarantees a reasonable decrease for the objective function.
Regarding \eqref{e:nm1}, by adopting the value for $\eta_k$, \eqref{e:golnm1} allows to employ a non-monotone technique whenever iterates are far away from the optimum and a nearly monotone whenever iterates are close to the optimal value. 

\section{Convergence Analysis}

In this section we discuss convergence properties of the presented line search method. Investigating convergence analysis, we assume 
$\nabla f(x)$ is Lipschitz continuous with constant $L>0$, i.e.,
for all $x,y \in \mathbb{R}^n$
\begin{equation}\label{e:lip}
    \|\nabla f(x)-\nabla f(y)\|\leq L \|x-y\|
\end{equation}
or equivalently
\begin{equation}\label{e:lip1}
   f(y) \leq f(x)+\nabla f(x)^T(y-x)+\frac{L}{2} \|x-y\|^2
\end{equation}

\subsection{Bounded Gain in the Target Function}

The first theorem implies \emph{efficiency} of the iterative 
method that enforces a bounded gain to the target function 
as described in \eqref{e:warth}. 

\begin{theorem}\label{thm:1} If for the descent direction $d_k$, 
the value $\alpha_k$ satisfies the sufficient descent condition \eqref{e:sufdescom} and in addition
\begin{equation}\label{e:ass0}
F_{k+1}-R_{k}\geq -\frac{\alpha_k^2}{2}L\|d_k\|^2+\alpha_k\Delta_k
\end{equation}
is satisfied, then for any step size $\alpha_k'$ with
$F(x_k+\alpha_k' d_k)\leq F_{k+1}$ we have
\begin{equation}\label{e:eq1}
\Big( F_k-F(x_k+\alpha_k'
d_k)\Big)\frac{\|d_k\|^2}{\Delta_k^2}\geq\frac{2\theta}{L}
\end{equation}
\end{theorem}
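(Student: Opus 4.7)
The plan is to sandwich $F_{k+1} - R_k$ around $\alpha_k \Delta_k$ up to a quadratic curvature error, translate this into a two-sided bound on $|1 - \lambda(\alpha_k)|$, and then combine with \eqref{e:sufdescom} to extract the desired lower bound on $(F_k - F_{k+1})\|d_k\|^2/\Delta_k^2$; the closing hypothesis $F(x_k + \alpha_k' d_k) \leq F_{k+1}$ just promotes the estimate from $F_{k+1}$ to the arbitrary admissible $F(x_k + \alpha_k' d_k)$.

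First, I would establish the upper bound $F_{k+1} - R_k - \alpha_k \Delta_k \leq \frac{L}{2}\alpha_k^2 \|d_k\|^2$. Applying the Lipschitz inequality \eqref{e:lip1} at $y = x_{k+1}$, $x = x_k$ yields the descent lemma $f_{k+1} \leq f_k + \alpha_k \nabla f_k^T d_k + \frac{L}{2}\alpha_k^2\|d_k\|^2$, while convexity of $c$ along the segment from $x_k$ to $x_k + d_k$ (for the natural regime $\alpha_k \in (0,1]$) yields $\mu(c_{k+1} - c_k) \leq \mu\alpha_k(c(x_k+d_k) - c_k)$. Adding these and invoking $R_k \geq F_k$ gives the claimed upper bound. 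Together with the hypothesis \eqref{e:ass0}, this confines $F_{k+1} - R_k - \alpha_k \Delta_k$ to the interval $\bigl[-\tfrac{L}{2}\alpha_k^2\|d_k\|^2,\; \tfrac{L}{2}\alpha_k^2\|d_k\|^2\bigr]$. Dividing by the positive quantity $|\alpha_k \Delta_k|$ (recall $\Delta_k < 0$ and $\alpha_k > 0$) then delivers
\[
|1 - \lambda(\alpha_k)| \;\leq\; \frac{L\,\alpha_k\, \|d_k\|^2}{2\,|\Delta_k|}.
\]

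Next, I would chain this with the semi-monotone sufficient descent condition \eqref{e:sufdescom}: $\theta \leq \nu(\alpha_k)\,|1 - \lambda(\alpha_k)| \leq \nu(\alpha_k)\,\tfrac{L\alpha_k\|d_k\|^2}{2|\Delta_k|}$, which in passing forces $\nu(\alpha_k) > 0$ and is consistent with genuine descent. Substituting $\nu(\alpha_k)\alpha_k = (F_{k+1}-F_k)/\Delta_k$ and using $|\Delta_k| = -\Delta_k$, the inequality rearranges into $(F_k - F_{k+1})\|d_k\|^2/\Delta_k^2 \geq 2\theta/L$. Any admissible $\alpha_k'$ satisfies $F(x_k + \alpha_k' d_k) \leq F_{k+1}$, whence $F_k - F(x_k + \alpha_k' d_k) \geq F_k - F_{k+1}$, and \eqref{e:eq1} follows.

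The main obstacle I anticipate is careful sign bookkeeping --- verifying at every step that $\Delta_k < 0$, $\alpha_k > 0$, and that positivity of $\nu(\alpha_k)$ is a consequence (rather than an assumption) of \eqref{e:sufdescom} --- together with a clean statement of the convexity step for $c$, which honestly needs $\alpha_k \in (0,1]$. Once those pieces are aligned, the remaining algebra is routine and the role of the non-monotone relaxation term $R_k - F_k$ is absorbed by the one-sided inequality $R_k \geq F_k$ used in the upper bound.
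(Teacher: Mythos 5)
Your proposal is correct and follows essentially the same route as the paper's proof: the two-sided bound $|F_{k+1}-R_k-\alpha_k\Delta_k|\leq\frac{L}{2}\alpha_k^2\|d_k\|^2$ obtained from the descent lemma, convexity of $c$, and $R_k\geq F_k$, then division by $\alpha_k|\Delta_k|$ and combination with \eqref{e:sufdescom} via $\alpha_k\nu(\alpha_k)=(F_k-F_{k+1})/|\Delta_k|$. Your remark that the convexity step for $c$ requires $\alpha_k\in(0,1]$ is a fair observation that the paper leaves implicit.
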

\begin{proof} Convexity of $c(x)$ in \eqref{e:defprob0} 
results in 
\begin{equation}
\begin{split}\label{e:mainineq0}
 &-\frac{\alpha_k^2}{2}L\|d_k\|^2+\alpha_k \Delta_k\overset{\eqref{e:ass0}}{\leq}~F_{k+1}-R_k~
\overset{\eqref{e:nm1}}{\leq} F_{k+1}-F_k\\
 &\overset{\text{\eqref{e:defprob0}}}{=}f_{k+1}-f_k+\mu\big(c_{k+1}-c_k\big)\overset{\eqref{e:lip1}}{\leq}\alpha_k
d_k^T\nabla f_k+
\frac{\alpha_k^2}{2}L\|d_k\|^2+\mu\big(c_{k+1}-c_k\big)\\
&\underset{\text{c(x)}}{\overset{\text{Conv}}{\leq}}\frac{\alpha_k^2}{2}L\|d_k\|^2+\alpha_k
d_k^T\nabla f_k+
\mu\alpha_k\Big(c(x_k+d_k)-c(x_k)\Big)
\overset{\text{\eqref{e:desdks0}}}{=}\frac{\alpha_k^2}{2}L\|d_k\|^2+\alpha_k
\Delta_k   
\end{split}
\end{equation}
Therefore, in total we can obtain
\begin{equation}\label{e:mainineq1}
\left|F_{k+1}-R_k-\alpha_k \Delta_k\right|\leq\frac{\alpha_k^2}{2}L\|d_k\|^2
\end{equation}
Making then use of \eqref{e:golnm1} gives
\begin{equation}\label{e:th11}
\frac{\|d_k\|^2}{\left|\Delta_k\right|}\geq\frac{2}{\alpha_k
L}\left| \frac{F_{k+1}-R_k-\alpha_k
\Delta_k}{\alpha_k \Delta_k}\right|
=\frac{2}{\alpha_k
L}\left|\lambda(\alpha_k)-1\right|
\end{equation}
We have $\Delta_k<0$ in \eqref{e:desdks0}. Therefore by 
definition of $\nu(\alpha_k)$ from \eqref{e:gol1}
\begin{equation}\label{e:th12}
\frac{F_k-F_{k+1}}{\left|\Delta_k\right|}=\frac{F_{k+1}-F_k}{\Delta_k}=\alpha_k\nu(\alpha_k)
\end{equation}
Taking the product of terms from \eqref{e:th11} and 
\eqref{e:th12} and using \eqref{e:sufdescom} leads to the result. \qed
\end{proof}

\subsection{Reaching to a Stationary Point}

The following theorem, the proof of which can be transferred 
identically from \cite{SB} to the current setting, 
characterizes the situation to reach an arbitrarily 
negative function value respectively getting arbitrarily close 
to a stationary point.

\begin{remark}\label{rem1}
Based on Lemma 3.1 and Lemma 2.1 in \cite{TY} for $d_k$ by \eqref{e:dpro} 
\begin{itemize}
    \item [i.] $x_k$ is a stationary point for problem \eqref{e:defprob0} if and only if
$d_k=0$.
    \item[ii.] 
$d_k\neq 0$ with $\tau_k$ by \eqref{e:tauk} is descent i.e., in \eqref{e:desdks0}
\begin{equation}\label{e:desdks}
\Delta_k\leq-\frac{1}{2\tau_k}\|d_k\|^2
\end{equation}
\end{itemize}
\end{remark}
\begin{theorem}\label{thm:3}
Let $\Tilde{\Delta_k}=\left|\Delta_k\right|>0$. Suppose that for every
integer $k\geq1$
\begin{equation}\label{e:th32}
\sup_k \|d_k\|^2\Big(
\frac{\|d_k\|^2}{\Tilde{\Delta}_k^2}-\frac{\|d_{k-1}\|^2}{\Tilde{\Delta}_{k-1}^2}\Big)<\infty
\end{equation}
Then
\begin{equation}\label{e:th34}
\lim_{k\rightarrow{\infty}} \|d_k\|=0~~~\textrm{or}~~~\lim_{k\rightarrow{\infty}}F_k=-\infty
\end{equation}
\end{theorem}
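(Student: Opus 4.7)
The plan is to adapt the classical Grippo--Lampariello--Lucidi analysis for non-monotone line search to the present semi-monotone Goldstein framework. Writing $M_k:=F_{l(k)}$, the first task is to convert the conclusion of Theorem~\ref{thm:1} into a decrease against the window maximum rather than against $F_k$ alone. Combining the upper inequality of \eqref{e:golsnm} with the bound $R_k\leq M_k$ (which follows from \eqref{e:nm1} and $F_k\leq M_k$) and applying Theorem~\ref{thm:1} with a suitably chosen $\alpha_k'$ satisfying $F(x_k+\alpha_k' d_k)\leq F_{k+1}$, I would extract a per-step bound of the form
\begin{equation*}
M_k-F_{k+1}\;\geq\;\kappa\,\frac{\Tilde{\Delta}_k^{\,2}}{\|d_k\|^2}
\end{equation*}
for a positive constant $\kappa$ depending only on $L$, $\theta$, $\eta_{\min}$ and $\eta_{\max}$.

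The second step is to establish monotonicity of $M_k$ past the warm-up phase. Since $F_{k+1}\leq R_k\leq M_k$ and the window length satisfies $m(k)\leq N-1$, a routine induction shows $M_{k+1}\leq M_k$ for every $k\geq N-1$. Consequently, either $M_k\to-\infty$ --- in which case $F_k\leq M_k\to-\infty$ and the right alternative of \eqref{e:th34} holds --- or $\{M_k\}$ converges to a finite limit.

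In the convergent case, the differences $M_k-M_{k+N}$ tend to zero. Summing the per-step bound across a window of length $N$ and passing to the GLL-style subsequence $\{k_j\}$ along which $M_{k_j+N}$ is attained by $F_{k_j+1}$, convergence of $M_k$ forces $\Tilde{\Delta}_{k_j}^{\,2}/\|d_{k_j}\|^2\to 0$ along this subsequence, equivalently $\|d_{k_j}\|^2/\Tilde{\Delta}_{k_j}^{\,2}\to\infty$. Hypothesis \eqref{e:th32} then controls the telescoping increments $\|d_k\|^2(\|d_k\|^2/\Tilde{\Delta}_k^{\,2}-\|d_{k-1}\|^2/\Tilde{\Delta}_{k-1}^{\,2})$ and, together with the descent bound \eqref{e:desdks} from Remark~\ref{rem1}, propagates the subsequential divergence of $\|d_k\|^2/\Tilde{\Delta}_k^{\,2}$ to the global conclusion $\|d_k\|\to 0$.

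The main obstacle is the GLL-style bookkeeping in the convergent case: identifying the subsequence on which the telescoping decrease can be exploited, and then using \eqref{e:th32} to transport the conclusion from that subsequence to the whole sequence. A secondary subtlety is extracting a uniform constant $\kappa$ in the first step despite the presence of the convex combination parameter $\eta_k$, which requires exploiting the explicit bounds $\eta_{\min}$ and $\eta_{\max}$ so that the relaxation from the strict monotone case does not degrade the per-step descent to zero.
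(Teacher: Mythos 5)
Your plan is built on the premise that in the semi-monotone setting one only controls the decrease relative to the window maximum $M_k=F_{l(k)}$, so that the full Grippo--Lampariello--Lucidi apparatus (monotonicity of $M_k$, the subsequence realizing the maxima, propagation from that subsequence to the whole sequence) is needed. That premise is not what the paper uses, and it makes your argument both longer and weaker than necessary. Theorem~\ref{thm:1}, applied with $\alpha_k'=\alpha_k$, already yields the \emph{monotone} Warth--Werner efficiency bound \eqref{e:warth}: $\bigl(F_k-F_{k+1}\bigr)\|d_k\|^2/\Delta_k^2\geq 2\theta/L>0$, so $F_{k+1}<F_k$ for every $k$ despite the relaxed upper bound in \eqref{e:golsnm}. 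This is precisely why the paper can say the proof "transfers identically" from \cite{SB}: if $F_k\not\to-\infty$, the decreasing sequence $F_k$ converges, telescoping gives $\sum_k\tilde{\Delta}_k^2/\|d_k\|^2<\infty$, hence $\|d_k\|^2/\tilde{\Delta}_k^2\to\infty$ along the \emph{whole} sequence --- no window maxima, no subsequence extraction. Your route only delivers $\tilde{\Delta}_{k_j}^2/\|d_{k_j}\|^2\to 0$ along the GLL subsequence, which is a strictly weaker starting point for the final step.

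That final step is where the genuine gap lies. You write that \eqref{e:th32} "controls the telescoping increments" and, "together with \eqref{e:desdks}, propagates the subsequential divergence to the global conclusion $\|d_k\|\to 0$" --- but this is an assertion, not an argument, and it is the mathematical core of the theorem. Note that \eqref{e:th32} alone does \emph{not} suffice even given $\|d_k\|^2/\tilde{\Delta}_k^2\to\infty$ on the full sequence: writing $a_k=\|d_k\|^2/\tilde{\Delta}_k^2$, the hypothesis $\|d_k\|^2(a_k-a_{k-1})\leq C$ places no constraint on $\|d_k\|$ at indices where $a_k\leq a_{k-1}$, and one can build abstract sequences with $a_k\to\infty$, $\sum 1/a_k<\infty$, \eqref{e:th32} satisfied, yet $\|d_k\|\not\to 0$. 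What actually closes the argument is the structural link between $d_k$ and $\Delta_k$ from \eqref{e:desdks}, namely $\|d_k\|^4/\tilde{\Delta}_k^2\leq 4\tau_{\max}^2$ as in \eqref{e:sup}, which gives $\|d_k\|^2\leq 4\tau_{\max}^2/a_k\to 0$ directly. You name \eqref{e:desdks} but never say how it enters; spelling this out is not "bookkeeping" but the decisive inequality. I would also drop the dependence of your constant $\kappa$ on $\eta_{\min},\eta_{\max}$: Theorem~\ref{thm:1} gives the decrease $F_k-F_{k+1}\geq\frac{2\theta}{L}\tilde{\Delta}_k^2/\|d_k\|^2$ against $F_k$ itself, which is stronger than a bound against $M_k$ and already independent of $\eta_k$.
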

Discarding the negative term in \eqref{e:th32} and considering \eqref{e:tauk} and \eqref{e:desdks} we have 
\begin{equation}\label{e:sup}
\frac{\|d_k\|^4}{\Tilde{\Delta}_k^2}\leq 4\tau_{\max}^2   
\end{equation}
which leads to \eqref{e:th32} with $\Tilde{\Delta_k}>0$.

\subsection{R-Linear Local Convergence}
To investigate R-linear convergence, we first present some preliminaries. Notice that one can reformulate \eqref{e:defprob0} as a smooth minimization problem over a closed convex set or epigraph of $c(x)$ \cite{TY} as 
\begin{equation}\label{e:refprob0}
\begin{array}{ll}
\min &~F(x)=f(x)+\mu \zeta\\
\textrm{s.t.}&~ (x,\zeta) \in \Omega=\big\{(x,\zeta)\mid c(x)\leq\zeta\big\}
\end{array}
\end{equation}
So, the first optimality condition for a stationary point $x^*$ is
\begin{equation}\label{e:optcon}
\nabla f(x^*)^T(x-x^*)+\mu\big(\zeta-c(x^*)\big)\geq 0, ~~~\forall (x,\zeta) \in \Omega 
\end{equation}
since, $\zeta^*=c(x^*)$.
We consider the generalized form of Lemma 2.1 in \cite{WYZG} as 
\begin{lemma}\label{pr:pro1}
For the proximal operator \eqref{e:dpro}, we have the following properties for all $x, y \in \mathbb{R}^n$ and $\vartheta>0$:
\begin{itemize}
\item[P1.]$\big(\mathscr{P}_{c}(x,\vartheta)-x\big)^T\big(y-\mathscr{P}_{c}(x,\vartheta)\big)+\vartheta\big(\zeta-c(\mathscr{P}_{c}(x,\vartheta))\big)\geq 0$, $\forall$  $(y,\zeta)\in \Omega$
\item[P2.] $\big(\mathscr{P}_{c}(x,\vartheta)-\mathscr{P}_{c}(y,\vartheta)\big)^T\big(x-y\big)\geq \|\mathscr{P}_{c}(x,\vartheta)-\mathscr{P}_{c}(y,\vartheta)\|^2$
\item[P3.] $\|\mathscr{P}_{c}(x,\vartheta)-\mathscr{P}_{c}(y,\vartheta)\|\leq\|x-y\|$
\end{itemize}
\end{lemma}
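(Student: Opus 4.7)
The plan is to treat the three claims sequentially, deriving P1 as a first-order optimality condition, then obtaining P2 by applying P1 symmetrically to two inputs, and finally getting P3 as a Cauchy--Schwarz consequence of P2.

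For P1, I would exploit the epigraphic reformulation of $\mathscr{P}_c(x,\vartheta)$ already introduced around \eqref{e:refprob0}. Writing $p := \mathscr{P}_c(x,\vartheta)$, the defining minimization \eqref{e:dpro1} is equivalent to
\begin{equation*}
\min_{(y,\zeta)\in\Omega}\Bigl\{\vartheta\zeta+\tfrac12\|x-y\|^2\Bigr\},
\end{equation*}
whose objective is smooth and whose feasible set is convex. The minimizer is attained at $(p, c(p))$ since any feasible $(y,\zeta)$ with $\zeta>c(y)$ can be improved by lowering $\zeta$. The standard first-order optimality condition for a smooth convex objective on a convex set, applied at $(p,c(p))$ with gradient $(p-x,\vartheta)$, directly yields
\begin{equation*}
(p-x)^T(y-p)+\vartheta(\zeta-c(p))\ge 0 \qquad\forall(y,\zeta)\in\Omega,
\end{equation*}
which is exactly P1.

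For P2, I set $p_x:=\mathscr{P}_c(x,\vartheta)$, $p_y:=\mathscr{P}_c(y,\vartheta)$, and invoke P1 twice. First with base point $x$ and trial pair $(p_y, c(p_y))\in\Omega$ I get $(p_x-x)^T(p_y-p_x)+\vartheta(c(p_y)-c(p_x))\ge 0$; then with base point $y$ and trial pair $(p_x,c(p_x))\in\Omega$ I obtain $(p_y-y)^T(p_x-p_y)+\vartheta(c(p_x)-c(p_y))\ge 0$. Adding these two cancels the $\vartheta$ terms, and a short expansion regroups the remaining quadratic terms into $(p_x-p_y)^T(x-y)-\|p_x-p_y\|^2\ge 0$, which is P2. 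Finally, P3 follows from P2 together with the Cauchy--Schwarz inequality: $\|p_x-p_y\|^2\le (p_x-p_y)^T(x-y)\le\|p_x-p_y\|\cdot\|x-y\|$, so dividing by $\|p_x-p_y\|$ (trivial if the latter vanishes) gives the non-expansiveness bound.

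There is no substantial obstacle here; P2 and P3 are classical consequences of firm non-expansiveness of proximal maps. The only point that requires a little care is the justification of P1 for the non-smooth $c$: one must argue that the epigraph reformulation is genuinely equivalent to \eqref{e:dpro1} and that the minimum is attained at $\zeta=c(p)$, so that the first-order condition for the smooth reformulated problem transfers back to the $(p,c(p))$ configuration. Once this is carefully stated, the rest of the argument reduces to the algebraic manipulations sketched above.
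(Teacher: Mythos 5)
Your proposal is correct and follows essentially the same route as the paper: P1 via the epigraph reformulation \eqref{e:refprob0} and the first-order optimality condition \eqref{e:optcon} applied to the smooth objective with gradient $(p-x,\vartheta)$, P2 by symmetrizing P1 over the two trial pairs and adding, and P3 by Cauchy--Schwarz on the left-hand side of P2. Your extra remark that the minimum of the reformulated problem is attained at $\zeta=c(p)$ makes explicit a point the paper leaves implicit ($\zeta^*=c(x^*)$), but the argument is otherwise identical.
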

\begin{proof}
    \begin{itemize}
        \item [\textit{P1.}] Considering unconstrained nonsmooth minimization problem in \eqref{e:dpro1} as a constrained smooth version in the way \eqref{e:refprob0} and applying \eqref{e:optcon} to its target function with $f(y)=\frac{1}{2}\|x-y\|^2$, $\nabla f(y)=y-x$ and $x^*=\mathscr{P}_{c}(x,\vartheta)$ gives the result.
        \item[\textit{P2.}] Replacing $y$ with $\mathscr{P}_{c}(y,\vartheta)$ and $\zeta$ with $c\big(\mathscr{P}_{c}(y,\vartheta)\big)$ in \textit{P1} gives
        \begin{equation}
         \big(\mathscr{P}_{c}(x,\vartheta)-x\big)^T\big(\mathscr{P}_{c}(y,\vartheta)-\mathscr{P}_{c}(x,\vartheta)\big)+\vartheta \big(c\big(\mathscr{P}_{c}(y,\vartheta)\big)-c\big(\mathscr{P}_{c}(x,\vartheta)\big)\big)\geq 0   
        \end{equation}
        We obtain a similar inequality if $x$ and $y$ is exchanged. Adding these two inequalities leads to the result.
        \item[\textit{P3.}] Apply the Cauchy-Schwarz inequality (CSI)
        to the left hand side of \textit{P2}.
    \end{itemize}\qed
\end{proof}
Let us note that we did not find the explicit proof of \textit{P1} in the literature. The standard proof of \textit{P2} in the literature relies on the use of subgradient optimality condition, see e.g. \cite{PB}. Here, we employ directly \textit{P1}. The proof of \textit{P3} is standard and recalled here for convenience. 
\begin{remark}\label{r:rem2}
f(x) is strongly convex with constant $L'>0$ if for all $x,y \in \mathbb{R}^n$
\begin{equation}\label{e:strcon}
    \big(\nabla f(x)-\nabla f(y)\big)^T(x-y)\geq L' \|x-y\|^2 
\end{equation}
\end{remark}
\begin{lemma}\label{l:lem2}
Suppose $x^*$ is a stationary point for \eqref{e:defprob0}. If $f(x)$ is strongly convex with $L'>0$ and $\nabla f(x)$ is Lipschitz continuous with $L>0$, then for every $k$, 
\begin{equation}\label{e:stcon}
 \|x_k-x^*\|\leq\frac{1+\tau_k{L}}{
\tau_k{L'}}\|d_k\|   
\end{equation}
\end{lemma}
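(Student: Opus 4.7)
The plan is to use Property \textit{P1} from Lemma \ref{pr:pro1} together with the optimality condition \eqref{e:optcon} at $x^*$ to eliminate the non-smooth regularizer $c(\cdot)$, then exploit strong convexity of $f$ to produce a lower bound of the form $\tau_k L' \|x_k - x^*\|^2$, and finally close the inequality with Cauchy–Schwarz and Lipschitz continuity of $\nabla f$.

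First I would instantiate \textit{P1} with $x = x_k - \tau_k \nabla f(x_k)$, $\vartheta = \mu \tau_k$, and the test point $(y,\zeta) = (x^*, c(x^*)) \in \Omega$. Recalling that $d_k = x_k^+ - x_k$, this yields
\begin{equation*}
\bigl(d_k + \tau_k \nabla f(x_k)\bigr)^T (x^* - x_k^+) + \mu \tau_k\bigl(c(x^*) - c(x_k^+)\bigr) \geq 0. \tag{I}
\end{equation*}
Next I would apply \eqref{e:optcon} to the feasible point $(x_k^+, c(x_k^+)) \in \Omega$ (using $\zeta^* = c(x^*)$), multiply by $\tau_k > 0$, and obtain
\begin{equation*}
\tau_k \nabla f(x^*)^T (x_k^+ - x^*) + \mu \tau_k \bigl(c(x_k^+) - c(x^*)\bigr) \geq 0. \tag{II}
\end{equation*}
Adding (I) and (II) cancels the $c$-terms and produces the crucial inequality
\begin{equation*}
d_k^T(x^* - x_k^+) \geq \tau_k \bigl(\nabla f(x_k) - \nabla f(x^*)\bigr)^T (x_k^+ - x^*).
\end{equation*}

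The next step is to split $x_k^+ - x^* = d_k + (x_k - x^*)$ on the right-hand side so that strong convexity \eqref{e:strcon} can be invoked on the term $(\nabla f(x_k) - \nabla f(x^*))^T(x_k - x^*) \geq L'\|x_k - x^*\|^2$. Using also the identity $d_k^T(x^* - x_k^+) = -d_k^T(x_k - x^*) - \|d_k\|^2$ and discarding the non-positive term $-\|d_k\|^2$, this rearranges to
\begin{equation*}
\tau_k L' \|x_k - x^*\|^2 \leq -d_k^T(x_k - x^*) - \tau_k \bigl(\nabla f(x_k) - \nabla f(x^*)\bigr)^T d_k.
\end{equation*}

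Finally, I would apply the Cauchy–Schwarz inequality to both terms on the right and use Lipschitz continuity \eqref{e:lip} to bound $\|\nabla f(x_k) - \nabla f(x^*)\| \leq L \|x_k - x^*\|$, yielding
\begin{equation*}
\tau_k L' \|x_k - x^*\|^2 \leq (1 + \tau_k L)\,\|d_k\|\,\|x_k - x^*\|.
\end{equation*}
Dividing through by $\tau_k L' \|x_k - x^*\|$ (the case $x_k = x^*$ being trivial by Remark \ref{rem1}) gives the claimed bound. The main obstacle I anticipate is the bookkeeping at the combination step: one must align signs carefully so that the $c$-terms cancel cleanly and the strong convexity can be applied to a term with a definite sign, rather than being swallowed by a sign error that weakens the estimate.
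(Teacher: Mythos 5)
Your proposal is correct and follows essentially the same route as the paper's proof: instantiating \textit{P1} at $x_k-\tau_k\nabla f_k$ with test point $(x^*,c(x^*))$, cancelling the $c$-terms via the optimality condition \eqref{e:optcon} applied to $(x_k^+,c(x_k^+))$, then using strong convexity, Cauchy--Schwarz, and Lipschitz continuity to close the bound. The only cosmetic difference is that you add the two inequalities while the paper substitutes one into the other, which is equivalent.
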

\begin{proof}
Replacing $x$ with $x_k-\tau_k \nabla f_k$, $y$ with $x^*$, $\zeta$ with $c(x^*)$ and $\vartheta=\mu\tau_k$ in \textit{P1} 
\begin{equation}
\begin{split}
\big(\mathscr{P}_{c}(x_k-\tau_k\nabla f_k,\mu\tau_k)-x_k+\tau_k\nabla f_k\big)^T\big(x^*-\mathscr{P}_{c}(x_k-\tau_k\nabla f_k,\mu\tau_k)\big)\\+\mu\tau_k\big(c(x^*)-c(\mathscr{P}_{c}(x_k-\tau_k\nabla f_k,\mu\tau_k))\big)\geq 0  
\end{split}
\end{equation}
which by \eqref{e:dpro} is equivalent to
\begin{equation}\label{e:p1} 
\big(d_k+\tau_k \nabla f_k\big)^T\big(x^*-x_k^+\big)+\mu\tau_k\big(c(x^*)-c(x_k^+)\big)\geq 0  
\end{equation}
Since $\big(x_k^+,c(x_k^+)\big)\in \Omega$, the optimality condition \eqref{e:optcon} gives
\begin{equation}\label{e:opt} 
\tau_k \nabla f(x^*)^T\big(x_k^+-x^*\big)\geq\mu\tau_k\big(c(x^*)-c(x_k^+)\big) 
\end{equation}
which by \eqref{e:p1} gives
\begin{equation}\label{e:optres} 
\big(d_k+\tau_k (\nabla f_k-\nabla f(x^*)\big)^T\big(x^*-x_k^+\big)\geq 0  
\end{equation}
Expanding \eqref{e:optres} by using \eqref{e:dpro} and rearranging terms, we obtain
\begin{equation}\label{e:optres1} 
 d_k^T(x^*-x_k)-\|d_k\|^2+\tau_kd_k^T(\nabla f(x^*)-\nabla f_k)\geq \tau_k(\nabla f_k-\nabla f(x^*))^T(x_k-x^*)   
\end{equation}
Defining $\delta x_k^* := x_k -  x^*$ 
and employing the CSI we have 

\begin{equation}
\begin{split}
 &\tau_k L'\|\delta x_k^*\|^2\overset{\eqref{e:strcon}}{\leq}\tau_k(\nabla f_k-\nabla f(x^*))^T  \delta x_k^*
 \overset{\text{\eqref{e:optres1}}}{\leq}d_k^T\delta x_k^*
 +\tau_kd_k^T(\nabla f(x^*)-\nabla f_k)\\
&{\overset{\text{CSI}}{\leq}}\|d_k\|\|\delta x_k^*\|
+\tau_k \|d_k\|\|\nabla f(x^*)-\nabla f_k\|
\overset{\text{\eqref{e:lip}}}{\leq}(1+\tau_kL)\|d_k\|\|\delta x_k^*\|
\end{split}
\end{equation}
which leads to the result.\qed
\end{proof}
\begin{remark}\label{r:rem3}
    According to Theorem \ref{thm:1}, the relations \eqref{e:tauk}, and \eqref{e:desdks}, we have 

\begin{equation}\label{e:strinf}  
\frac{F_k-F_{k+1}}{\|d_k\|^2}=\Big(F_k-F_{k+1}\Big)\frac{\|d_k\|^2}{\Delta_k^2}\Big(\frac{\Delta_k}{\|d_k\|^2}\Big)^2\underset{\text{Th1}}{\overset{\eqref{e:desdks}}{\geq}}\frac{\theta}{2L\tau_{\max}^2}\overset{\text{\eqref{e:tauk}}}{\geq}\frac{\theta}{2L\tau_{\max}^2}>0
\end{equation}
\end{remark}
\begin{theorem}\label{thm:4}
Suppose that $d_k$ is produced by\
\eqref{e:dpro}, the sequence $\{x_k\}$ converges to a minimizer $x^*$, and suppose $f(x)$ is a strongly convex function, then there are constants $q\in(0,1)$ and $\xi_1, \xi_2, \xi_3>0$
such that, for all $k$, we have
\begin{equation}\label{e:th42}
F_{k}-F(x^*)\leq \xi_1 q^{2k},~~~~\|x_k-x^*\|\leq \xi_2
q^{k},~~~~\|d_k\|\leq \xi_3 q^{k}
\end{equation}
\end{theorem}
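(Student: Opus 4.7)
The plan is to establish Q-linear decay of the optimality gap $a_k := F_k - F(x^*)$ and then deduce the R-linear rates for $\|x_k - x^*\|$ and $\|d_k\|$ as corollaries. Two tools are already in hand: Remark~\ref{r:rem3} supplies the sufficient-decrease inequality $a_k - a_{k+1} \geq \gamma \|d_k\|^2$ with $\gamma = \theta/(2L\tau_{\max}^2)$, and Lemma~\ref{l:lem2} supplies the error bound $\|x_k - x^*\| \leq C_1 \|d_k\|$ with $C_1 = (1+\tau_{\max}L)/(\tau_{\min}L')$. What is missing is a quadratic-growth inequality $a_k \leq C_g \|d_k\|^2$; combined with Remark~\ref{r:rem3} this immediately gives $a_{k+1} \leq (1-\gamma/C_g)\,a_k$, so iterating yields $a_k \leq \xi_1 q^{2k}$ with $q^2 := 1 - \gamma/C_g \in (0,1)$ and $\xi_1 = F_0 - F(x^*)$.

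To obtain the quadratic-growth bound I would first apply P1 of Lemma~\ref{pr:pro1} with $x = x_k - \tau_k \nabla f_k$, $y = x^*$, $\zeta = c(x^*)$, which gives an upper bound for $\mu(c(x_k^+)-c(x^*))$ in terms of $d_k^T(x^*-x_k^+)$ and $\nabla f_k^T(x^*-x_k^+)$. Combining this with Lipschitz smoothness of $f$ between $x_k$ and $x_k^+$, with plain convexity of $f$ at $x_k$ relative to $x^*$ (which turns the term $f(x_k)+\nabla f_k^T(x^*-x_k)$ into the upper bound $f(x^*)$), and finally with the error bound from Lemma~\ref{l:lem2} to convert leftover distance factors into multiples of $\|d_k\|^2$, delivers $F(x_k^+) - F(x^*) \leq C_2 \|d_k\|^2$. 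A short complementary argument using the proximal descent property \eqref{e:desdks} and once more the Lipschitz smoothness of $f$ bounds $F_k - F(x_k^+)$ by another $O(\|d_k\|^2)$ term, so $a_k \leq C_g \|d_k\|^2$ for some constant $C_g$.

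With $a_k \leq \xi_1 q^{2k}$ established, strong convexity of $F$ gives $\|x_k - x^*\|^2 \leq (2/L')\,a_k$ and hence $\|x_k - x^*\| \leq \xi_2 q^k$ with $\xi_2 = \sqrt{2\xi_1/L'}$; the nonexpansivity P3 of Lemma~\ref{pr:pro1} together with Lipschitzness of $\nabla f$ yields $\|x_k^+ - x^*\| \leq (1+\tau_{\max}L)\|x_k-x^*\|$, so by the triangle inequality $\|d_k\| \leq (2+\tau_{\max}L)\|x_k-x^*\| \leq \xi_3 q^k$. The principal obstacle will be the quadratic-growth step: since $F$ is not Lipschitz-smooth (as $c$ is non-smooth), this composite Polyak--\L{}ojasiewicz-type inequality must be pieced together purely from the proximal optimality condition, the Lipschitz property of $\nabla f$ alone, and the strong convexity of $f$. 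A secondary subtlety is that in the semi-monotone regime the step size $\alpha_k$ need not lie in $[0,1]$, so $x_{k+1}$ is not a convex combination of $x_k$ and $x_k^+$, and the relation between $F_{k+1}$ and $F(x_k^+)$ must be controlled through the line-search condition \eqref{e:sufdescom} together with $R_k \geq F_k$ rather than via a direct convexity argument.
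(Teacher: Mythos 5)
Your architecture --- combine the sufficient decrease of Remark~\ref{r:rem3} with a ``quadratic growth'' bound $a_k:=F_k-F(x^*)\le C_g\|d_k\|^2$ to get the Q-linear recursion $a_{k+1}\le(1-\gamma/C_g)a_k$, then read off the other two rates --- founders on exactly the step you flag as the principal obstacle: the inequality $a_k\le C_g\|d_k\|^2$ is \emph{false} for the composite objective \eqref{e:defprob0}. Take $n=1$, $f(x)=\tfrac12x^2$ (so $L=L'=1$), $c(x)=|x|$, $\mu=1$, hence $x^*=0$, $F^*=0$. For $x_k>0$ small and $\tau_k\in(0,1)$ the proximal step returns $x_k^+=0$, so $\|d_k\|=x_k$, while $F_k-F^*=\tfrac12x_k^2+x_k$; thus $a_k/\|d_k\|^2=\tfrac12+1/x_k\to\infty$. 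The first half of your construction, $F(x_k^+)-F^*\le C_2\|d_k\|^2$ via P1, Lipschitz smoothness, convexity and Lemma~\ref{l:lem2}, does go through. What breaks is the ``short complementary argument'' for $F_k-F(x_k^+)$: the descent estimate \eqref{e:desdks} and the bound \eqref{e:lip1} only yield the \emph{lower} bound $F_k-F(x_k^+)\ge-\Delta_k-\tfrac{L}{2}\|d_k\|^2$, whereas the actual decrease produced by the proximal step can be of order $\|d_k\|$, not $\|d_k\|^2$. This happens precisely when $0$ lies in the interior of $\partial F(x^*)$, i.e.\ at a sharp minimum --- the generic situation for $\ell_1$-regularized problems, which is the paper's target application. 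Since your bounds on $\|x_k-x^*\|$ and $\|d_k\|$ are all derived from $a_k\le\xi_1q^{2k}$, the whole chain collapses with this step.

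For comparison, the paper gives no self-contained argument but transfers the proof of Theorem~3 of \cite{SB}, and the ingredients it lists (P3 of Lemma~\ref{pr:pro1}, Lemma~\ref{l:lem2}, Remark~\ref{r:rem3}) point to the opposite order of attack: one first contracts the \emph{iterates}, using $x^*=\mathscr{P}_{c}(x^*-\tau_k\nabla f(x^*),\mu\tau_k)$ together with P3, strong convexity and Lipschitz continuity to get $\|x_k^+-x^*\|\le\sqrt{1-2\tau_kL'+\tau_k^2L^2}\,\|x_k-x^*\|$, propagates this through $x_{k+1}=(1-\alpha_k)x_k+\alpha_kx_k^+$ with a step size bounded away from zero, and only afterwards deduces the rates for $\|d_k\|$ (via P3 and the triangle inequality, as you do) and for $F_k-F^*$. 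Note that near a sharp minimum $F_k-F^*$ is only $O(\|x_k-x^*\|)$, not $O(\|x_k-x^*\|^2)$, so the exponent $2k$ in \eqref{e:th42} is obtained by re-choosing $q$ as the square root of the contraction factor rather than by a genuine squared rate; any correct proof has to respect this, and yours, which tries to make the function values decay at the squared rate of the iterates with the same $q$ via quadratic growth, cannot.
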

\begin{proof}
Using \textit{P3} of Lemma \ref{pr:pro1}, Lemma \ref{l:lem2}, and Remark \ref{r:rem3}, the corresponding proof of Theorem 3 in \cite{SB} can be transferred identically to our setting. \qed
\end{proof}


\section{Numerical Experiments}

In order to give a reasonable comparison to the scheme
proposed in \cite{SB} and explore the usefulness 
of the approach in compressed
sensing applications, we focus on the model
problem \eqref{e:defprob1} in our experiments. 

To this end we employ $c(x)=\|x\|_1$ in \eqref{e:defprob0}, so that $d_k$ in \eqref{e:dpro} is the \textit{shrinkage operator}\ \cite{PB,SB}
\begin{equation}\label{e:dks}
d_k=\mathcal{S}\Big(x_k-\tau_k \nabla f(x_k),\mu \tau_k\Big)-x_k
\end{equation}
Using \eqref{e:dks} with dynamic parameter $\tau_k$ in \eqref{e:tauk} and the proposed semi-monotone technique in Section \ref{smgs}, we obtain the new \emph{semi-monotone iterative shrinkage Goldstein algorithm} ({\tt smISGA}). 

Numerical experiments are performed to demonstrate the properties of {\tt smISGA} and to compare it to the compressed sensing reconstruction algorithms {\tt FPC-BB} \cite{HYZ}, {\tt TwIST} \cite{BF}, {\tt FISTA} \cite{BT}, {\tt SpaRSA} \cite{WNF}, and {\tt ISGA} \cite{SB}. The experiments  were run in MATLAB R2024b on an egino BTO with 128 × Intel$^{\circledR}$ Xeon$^{\circledR}$ Gold 6430 and 125,1 GiB of RAM.  

\subsubsection{Experimental Setting}
By specifying size and type of the matrix, problem data $A$ and $b$ are generated in \eqref{e:defprob1}. We employ the same methodology for that as in \cite{SB}, which we now briefly recall.
Based on randomization of entries or composition, 
six types of test matrices $A$ representing 
standard tests in 
compressed sensing are evaluated: 
{\tt{Gaussian}}, {\tt{scaled Gaussian}},
{\tt{orthogonalized Gaussian}}, {\tt{Bernoulli}}, 
{\tt{partial Hadamard}}, and 
{\tt{partial discrete cosine transform}}.

Given the dimension of the signal $x$ as $n\in\{2^{10},\ldots,
2^{15}\}$, in agreement with the $\rho$ and 
$\delta$ in $\{0.1,0.2, 0.3\}$, the dimension of the observation vector $b$ 
is produced by $m=\round{\delta n}$, and  
the number of nonzero elements in an exact 
solution $xs$ is given by $k=\round{\rho m}$.
Where $\round{\cdot}$ indicates rounding
to closest integer.

For statistical evaluation, $xs$ and $b$ were contaminated 
by Gaussian noise with values in
$\{(10^{-h},10^{-h}) \, | \, h=1, 3, 5, 7 \}$.
This leads to the generation 
of $1296$ random test problems in
the compressed sensing field. 

\begin{figure}[ht]
\begin{center}
\begin{tabular}{llll}
{\includegraphics[width=4.0cm]{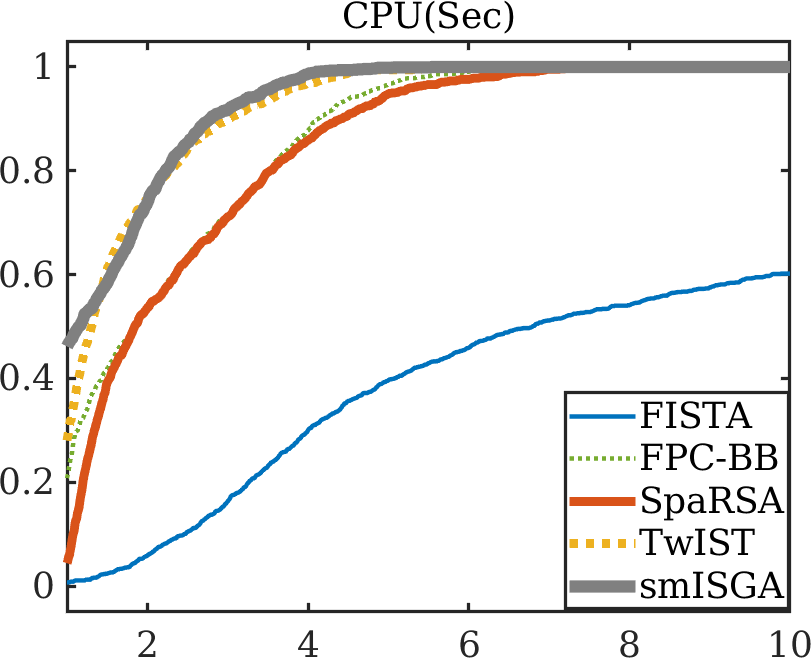}}{\includegraphics[width=4.0cm]{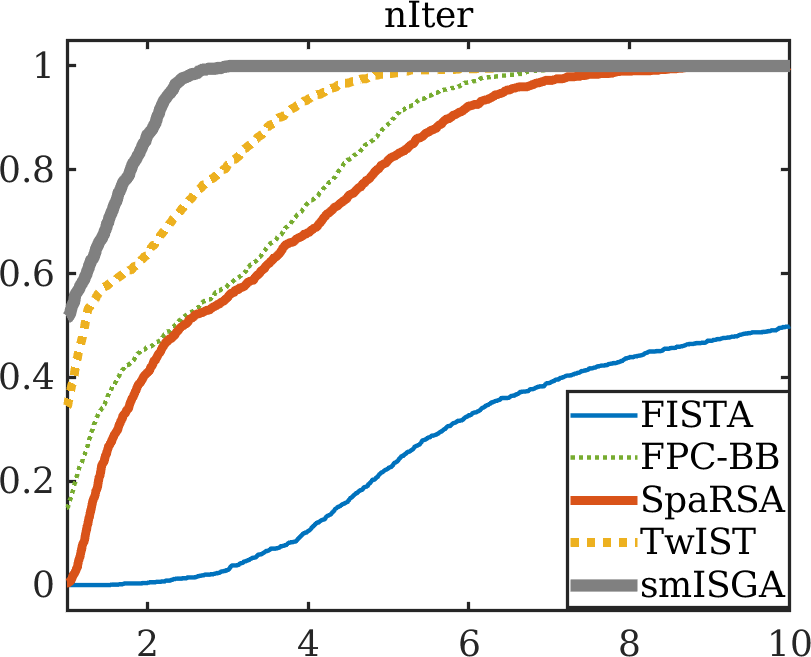}}{\includegraphics[width=4.0cm]{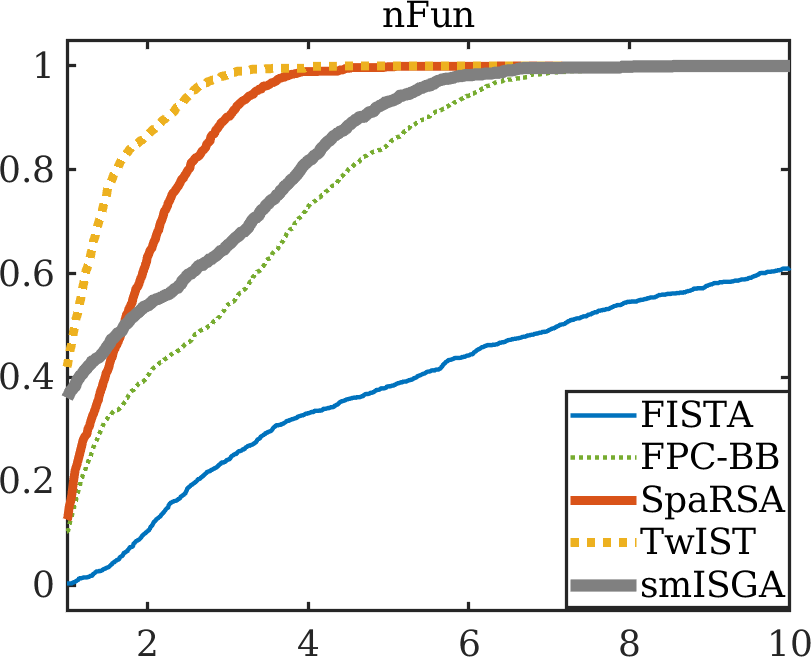}}\\
\end{tabular}
\caption{\label{Fig:P1}
Comparison among {\tt FISTA}, {\tt FPC-BB}, {\tt
SpaRSA}, {\tt TwIST}, and {\tt smISGA} with the performance cost
metric {\tt CPU(Sec)}, {\tt nIter}, and {\tt nFun}, respectively. $x$ and $y$ axes indicate $\varsigma$ and $P_{{\tt s}}(\varsigma)$, respectively; see \eqref{e:perfo}.
}
\end{center}
\end{figure}

\subsubsection{Implementation Details}
For all solvers, the initial point value 
$x_0$ is set to the zero vector, 
and we set $\mu=2^{-8}$. 
For {\tt FPC-BB}, {\tt SpaRSA}, {\tt ISGA} and {\tt smISGA}, 
we select safeguard parameters as
$\tau_{\min}=10^{-4}$, $\tau_{\max}=10^4$, 
as well as $\theta= 10^{-10}$ 
specifically for {\tt ISGA} and {\tt smISGA}. 
The stopping condition for all solvers is given by 
\begin{equation}
  \|F(x_{k+1})-F(x_k)\|\leq {\tt {Ftol}}\|F(x_k)\| 
\end{equation}
with ${\tt {Ftol}}=10^{-10}$, or if the number of iterations exceeds
10000.
According to \cite{ESM}, the parameter $\eta_k$, with $\eta_0=0.5$, is updated by 
$\eta_k = \frac{2}{3}\eta_{k-1}+0.01$
if $\|\nabla f_k\| \leq 10^{-2}$, and 
$\eta_k = \max\{0.99\eta_{k-1}, 0.5\}$ else.

\subsubsection{Evaluation of Computational Efficiency}
Our cost metrics are here
the time in seconds ({\tt CPU(Sec)}), the total number of iterations 
({\tt nIter}), and the number of function evaluations ({\tt nFun}).
We first consider \emph{performance profiles} \cite{DM2} 
to benchmark and compare solvers on the generated test
problems. 

Let us give a short explanation of the performance profiles. 
We consider a number of $\tt {n_s}$ solvers in a set of solvers 
$\tt S$ and $\tt {n_p}$ problems in the problem set $\tt P$. 
We want for example to evaluate 
computing time via the performance profile. 
Let $t_{{\tt p},{\tt s}}$ thus be computing time 
to solve problem $\tt p$ by solver $\tt s$. 
We compare the performance on $\tt p$ by $\tt s$ with the best performance 
by any solver tackling $\tt p$ via the quantity 
$ r_{{\tt p},{\tt s}}$ that is defined as follows:
\begin{equation}\label{e:perfo}
    r_{{\tt p},{\tt s}}=\frac{t_{{\tt p},{\tt s}}}{\min \{t_{{\tt p},{\tt s}}:{\tt s}\in {\tt S}\}},
    ~~~~P_{{\tt s}}(\varsigma)=
    \frac{\displaystyle{
    \mathrm{size}\{{\tt p}\in{\tt P}:r_{{\tt p},{\tt s}}\leq \varsigma\}}}{n_{p}}
\end{equation}
Then in \eqref{e:perfo}, $P_{{\tt s}}(\varsigma)$ gives the probability for $\tt s$ that a performance ratio $r_{{\tt p},{\tt s}}$ is within a factor $\varsigma\in\mathbb{R}$. 
\begin{table}[ht]
\caption{\label{table-sb-1}
Average values (left block) and their standard deviations (right block)
of the cost metrics {\tt CPU(Sec)}, {\tt nIter}, and {\tt nFun} for all algorithms, computed over all test problems}
\begin{tabular}{|c||c|c|c||c|c|c|}
\hline
\textbf{Tested}&\multicolumn{6}{|c|}{\textbf{\phantom{joker} \; Mean Values  \, \& \, Standard Deviations}}\\
\cline{2-7} 
\textbf{ Algorithms } & \textbf{\text{ CPU(Sec) }}& \textbf{\text{ nIter }}& \textbf{\text{ nFun }}& \textbf{\text{ CPU(Sec) }}& \textbf{\text{ nIter }}& \textbf{\text{ nFun }}\\
\hline
\tt{FISTA}& ${\mathrm{10.9}}$& ${\mathrm{2527.2}}$ & ${\mathrm{2528.2 }}$& ${\mathrm{30.1}}$& ${\mathrm{2141.6}}$ & ${\mathrm{2141.6}}$\\
\hline
\tt{FPC-BB}& ${\mathrm{2.3}}$& ${\mathrm{592.7}}$ & ${\mathrm{1129.7}}$& ${\mathrm{5.7}}$& ${\mathrm{471.6}}$ & ${\mathrm{971.1}}$\\
\hline
\tt{SpaRSA}& ${\mathrm{2.5}}$& ${\mathrm{659.9}}$ & ${\mathrm{659.9}}$& ${\mathrm{6.4}}$& ${\mathrm{527.3}}$ & ${\mathrm{527.3}}$\\
\hline
\tt{TwIST}& ${\mathrm{1.3}}$& ${\mathrm{387.0}}$ & 
${\mathrm{443.4}}$& ${\mathrm{2.9}}$& 
${\mathrm{324.2 }}$ & ${\mathrm{352.5}}$\\
\hline
\tt{ISGA}& 
${\mathrm{0.9}}$& 
${\mathrm{209.7}}$ & 
${\mathrm{291.5}}$& ${\mathrm{1.6}}$& ${\mathrm{37.3}}$ & ${\mathrm{36.1}}$ \\
\hline
\tt{smISGA}& 
${\mathrm{ 0.9}}$& 
${\mathrm{216.6}}$ & 
${\mathrm{492.4}}$& ${\mathrm{1.8}}$& ${\mathrm{67.7}}$ & ${\mathrm{138.8}}$\\
\hline
\end{tabular}
\end{table}
By Figure \ref{Fig:P1} one can infer that in general, the proposed 
new solver {\tt smISGA} appears to be in total more efficient than the 
other tested solvers, as it appears on top in two out of three categories and in upper position at the third one.

Comparison to {\tt ISGA} with respect to efficiency is included in 
Table \ref{table-sb-1} which complements the performance profile
assessment. For a proper interpretation of the table, let us recall
that we deal with a statistical evaluation of randomized experiments.
Due to randomization the resulting optimization problems may be
understood as a sample of the range of problem difficulties. 
Thus our aim is
to assess how the tested methods adapt computationally to 
this variety of problems.

As we can observe, mean values of {\tt CPU(Sec)} 
and {\tt nIter} are about the same level 
for {\tt smISGA} and {\tt ISGA}. 
As for the number of function evaluations, 
the {\tt smISGA} method is slightly
more numerically expensive than the highly efficient
{\tt ISGA} and in the same regime as {\tt TwIST}.
The other methods feature much higher numbers. 

A large difference between methods can be observed 
in the standard deviations, 
indicating how the methods can cope with problems of varying difficulty. 
While both {\tt smISGA} and {\tt ISGA} methods 
stay well within a relatively small range and thus give a stable
performance independently of the difficulty level of the problem, 
standard deviations for all other methods are very high.
Which gives experimental evidence
of robustness of {\tt smISGA} and {\tt ISGA}.
\begin{figure}[b]
\floatbox[{\capbeside\thisfloatsetup{capbesideposition={right,top},capbesidewidth=6.5cm}}]{figure}[\FBwidth]
{\caption{Scatter plot to show relationships between the (logarithmic)
relative errors of {\tt ISGA} and {\tt smISGA} over all test problems. We observe that 
in a high percentage of test problems the 
relative error obtained by {\tt smISGA}
is in comparison much lower than for {\tt ISGA}.
This confirms the usefulness of the proposed semi-monotone strategy to help in convergence.
}\label{Fig:P2}}
{\includegraphics[width=5cm]{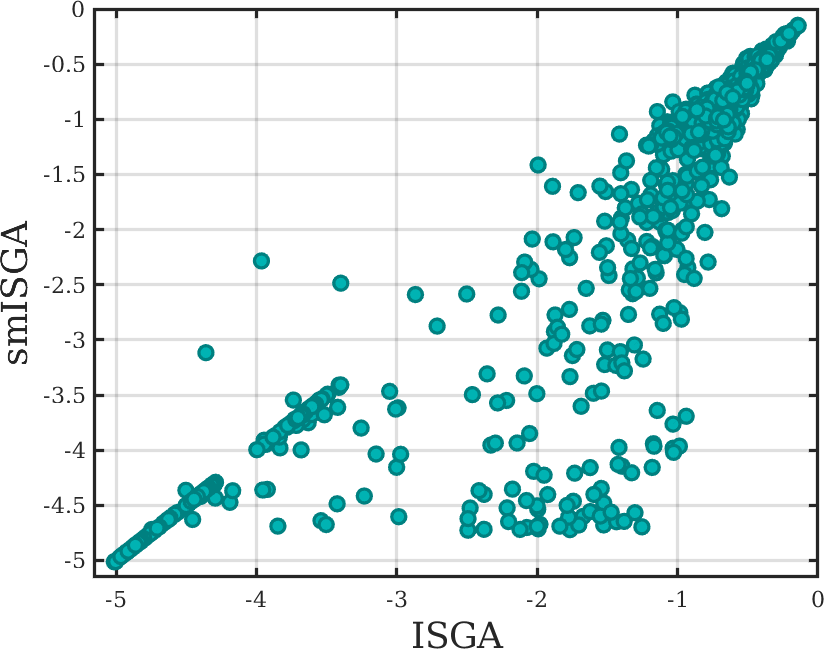}}
\end{figure}
\subsubsection{Evaluation of Quality}
We now consider the quality of results obtained
in terms of the relative error 
achieved by minimization, 
cf. Table \ref{table-sb-2}.
For proper interpretation, let us observe that
all tested methods obtain about 
the same range,
provided in terms of maximum and minimum of relative errors, whereas this
range gives an account of the
difficulty levels of the randomized problems. 

While showing a similar behaviour in terms of 
standard deviation, the mean values of both {\tt ISGA}
and {\tt smISGA} methods are higher than for the other schemes.
Where {\tt smISGA} shows at the same time
a notable improvement compared to {\tt ISGA} method, 
which is made even more apparent via Figure \ref{Fig:P2}.

\subsubsection{Conclusion on Experiments}
Complementing the study
of quality with the results on computational efficiency,
one may interpret them as follows. The
high efficiency and robustness properties 
of {\tt ISGA}/{\tt smISGA} may have in some experiments 
of medium to high difficulty the price of resulting 
in higher relative error compared to other methods. 
In turn, while the computational effort of the other tested methods
is higher in general, in some experiments of medium to high difficulty
the effort they take for minimization may grow
substantially, which may deteriorate efficiency.
Comparing {\tt smISGA} with the technically related {\tt ISGA}
method that is based on the monotone Goldstein technique, 
we achieve with the new {\tt smISGA} method a considerably
higher quality at the price of a slightly higher 
computational load in some problems.

\begin{table}[t]
\caption{\label{table-sb-2}
Average values \text{RelErr} and 
standard deviations \text{SdRelErr} 
of the relative error (left block), 
as well as the range of relative errors
given by maximum value \text{MaxRelErr} and minimum value \text{MinRelErr}
(right block), computed over all test problems}
\begin{center}
\begin{tabular}{|c||c|c||c|c|}
\hline
\textbf{Tested}&\multicolumn{4}{|c|}{\textbf{\phantom{all} Mean Values, Standard Deviations and Range \phantom{all}}} \\
\cline{2-5} 
\textbf{ Algorithms } & \textbf{ \text{RelErr } }& \textbf{ \text{SdRelErr } }& \textbf{ \text{MaxRelErr } }& \textbf{ \text{MinRelErr } }\\
\hline
\tt{FISTA}& ${\mathrm{0.2052}}$& ${\mathrm{0.2284}}$ & ${\mathrm{0.8495}}$ & ${\mathrm{0.0066}}$\\
\hline
\tt{FPC-BB}& ${\mathrm{0.2056}}$& ${\mathrm{0.2287}}$ & ${\mathrm{0.8501}}$ & ${\mathrm{0.0067}}$\\
\hline
\tt{SpaRSA}& ${\mathrm{0.2054}}$& ${\mathrm{0.2286}}$ & ${\mathrm{0.8499}}$ & ${\mathrm{0.0067}}$\\
\hline
\tt{TwIST}& ${\mathrm{0.2052}}$& ${\mathrm{0.2285}}$ & ${\mathrm{0.8498}}$ & ${\mathrm{0.0067}}$\\
\hline
\tt{ISGA}& 
${\mathrm{0.3735}}$& ${\mathrm{0.2605}}$ & ${\mathrm{0.8683}}$ & ${\mathrm{0.0067}}$\\
\hline
\tt{smISGA}& 
${\mathrm{0.3110}}$& ${\mathrm{0.2534}}$ & ${\mathrm{0.8627}}$ & ${\mathrm{0.0067}}$\\
\hline
\end{tabular}
\end{center}
\end{table}



\section{Conclusion}

We have proposed and investigated a new semi-monotone 
Goldstein technique useful for compressed sensing and related problems.
We have validated experimentally that the method is a useful
extension of a previous monotone Goldstein technique.
In future work we strive to investigate the theoretical
complexity of the approach, and we aim to tackle 
other important optimization problems as the methodology
introduced here is not limited to 
basis pursuit denoising.

\begin{credits}
\subsubsection{\ackname} We acknowledge funding by the German Aerospace Center
(DLR) as part of project AIMS: Artificial Intelligence Meets Space (grant number 50WK2270F).
\end{credits}

\end{document}